\documentclass[11pt]{amsart}
\usepackage{amsmath}
\usepackage{amsthm}
\usepackage{mathrsfs}
\usepackage{amssymb,graphics,amscd,amsfonts}
\usepackage{color}
\usepackage{comment}
\usepackage{enumerate}
\usepackage{booktabs}
\usepackage{float}
\usepackage{mathtools}

\makeatletter

\@addtoreset{equation}{section}
\makeatother

\makeatletter
\@namedef{subjclassname@2020}{%
  \textup{2020} Mathematics Subject Classification}
\makeatother

\def\XXint#1#2#3{{\setbox0=\hbox{$#1{#2#3}{\int}$}
\vcenter{\hbox{$#2#3$}}\kern-.5\wd0}}

\theoremstyle{plain}
\newtheorem{theorem}{Theorem}[section]

\newtheorem{corollary}[theorem]{Corollary}
\newtheorem{lemma}[theorem]{Lemma}

\newtheorem*{problem}{Problem}

\theoremstyle{definition}
\newtheorem*{acknowledgements}{Acknowledgements}

\DeclareMathOperator{\Ric}{Ric}

\DeclareMathOperator{\grad}{grad}

\DeclareMathOperator{\Isom}{Isom}
\DeclareMathOperator{\Fix}{Fix}
\DeclareMathOperator{\Lie}{Lie}

\title[Extremal K\"ahler--Ricci solitons on Fano manifolds are K\"ahler--Einstein]{Extremal K\"ahler--Ricci solitons on \\Fano manifolds are K\"ahler--Einstein}

\author{Yasufumi Nitta}

\address{Department of Mathematics, Faculty of Science Division II, Tokyo University of Science, 1-3 Kagurazaka, Shinjuku-ku, Tokyo 162-8601, Japan}
\email{nitta@rs.tus.ac.jp}

\keywords{K\"ahler--Einstein metrics, K\"ahler--Ricci solitons, extremal K\"ahler metrics}
\subjclass[2020]{Primary~53C25, Secondary~53C55}

\begin{document}
\pagestyle{plain}
\begin{abstract}
We prove that every extremal K\"ahler--Ricci soliton on a Fano manifold is K\"ahler--Einstein. 
This solves the problem of Calamai and Petrecca in full generality. 
\end{abstract}
\maketitle
\section{Introduction}
Finding canonical K\"ahler metrics in a given K\"ahler class is one of the central problems in K\"ahler geometry. 
In this paper, we consider relations among various canonical K\"ahler metrics on Fano manifolds. 
Let $X$ be an $n$-dimensional Fano manifold. 
Let $g$ be a K\"ahler metric on $X$ with its K\"ahler form $\omega_{g}$ representing $c_{1}(X)$. 
By the well-known $\partial\bar{\partial}$-lemma, there is a unique $F_{g} \in C^{\infty}(X, \mathbf{R})$ satisfying 
\begin{align}\label{Ricci_potential}
\Ric(\omega_{g})-\omega_{g} = \frac{\sqrt{-1}}{2\pi}\partial\bar{\partial}F_{g}\quad \text{and}\quad \int_{X}(1-e^{F_{g}})\omega_{g}^{n} = 0. 
\end{align}
The function $F_{g}$ is called the \emph{Ricci potential} of $g$. 
Among the various notions of canonical K\"ahler metrics, \emph{K\"ahler--Einstein metrics} form one of the most fundamental and well-studied classes. 
They are characterized by 
\begin{align*}
\Ric(\omega_{g}) = \omega_{g}, 
\end{align*}
or equivalently, by $F_{g} = 0$. 

In this paper, we consider two well-known generalizations of K\"ahler--Einstein metrics: extremal K\"ahler metrics and K\"ahler--Ricci solitons. 
The former were introduced by Calabi \cite{Ca82, Ca85}. 
A K\"ahler metric $g$ is called \emph{extremal} if $\grad_{\omega_g}^{\mathbf{C}} s(\omega_g)$ is a holomorphic vector field on $X$, where $s(\omega_g)$ denotes the scalar curvature of $g$. 
The latter, K\"ahler--Ricci solitons, were introduced by Koiso \cite{Koi90}. 
A K\"ahler metric $g$ is called a \emph{K\"ahler--Ricci soliton} if there exists a holomorphic vector field $V$ on $X$ such that
\[
\Ric(\omega_{g}) - \omega_{g} = L_{V}\omega_{g}. 
\]
Here $L_{V}$ means the Lie derivative. 
This condition is equivalent to saying that $\grad_{\omega_{g}}^{\mathbf{C}}F_{g}$ is a holomorphic vector field on $X$. 

It is clear that every K\"ahler--Einstein metric is both extremal and a K\"ahler--Ricci soliton. 
We are concerned with the converse implication. 
In \cite{CP16}, Calamai and Petrecca proposed the following problem. 

\begin{problem}[\cite{CP16}]
Prove that every extremal K\"ahler--Ricci soliton is Einstein or find a counterexample. 
\end{problem}

The authors gave in \cite{CP16} an affirmative answer to this problem under the assumption of positive holomorphic sectional curvature. 
They subsequently proved in \cite{CP17} that every extremal K\"ahler--Ricci soliton on a toric Fano manifold is necessarily K\"ahler--Einstein. 
Later, Lian extended this result to the case of Fano homogeneous toric bundles \cite{L24}. 
A similar result for radial K\"ahler metrics is known in \cite{LSZ21}. 

The main result of this paper gives a complete affirmative answer to the above problem. 

\begin{theorem}\label{main_thm}
Every extremal K\"ahler--Ricci soliton on a Fano manifold is K\"ahler--Einstein. 
\end{theorem}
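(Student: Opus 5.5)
The plan is to turn the two defining conditions into a single overdetermined identity for the soliton field, and then show that this identity forces the field to vanish. Let $g$ be a K\"ahler--Ricci soliton with Ricci potential $F=F_g$. Then $V=\grad^{\mathbf{C}}_{\omega_g}F$ is holomorphic, and $JV$ is Killing after replacing $g$ by a $K$-invariant metric in the standard way.

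\emph{Weighted Laplacian.} I will use the weighted Laplacian $\Delta_F u=\Delta u+\langle \partial F,\bar\partial u\rangle$, which is self-adjoint with respect to $e^{F}\omega_g^n$. With the normalization (\ref{Ricci_potential}), and up to the factor $2\pi$, the standard fact is this: a real function $\theta$ with $\grad^{\mathbf{C}}\theta$ holomorphic satisfies $\Delta_F\theta+\theta=\mathrm{const}$. Conversely, real solutions of $\Delta_F\theta+\theta=0$ are precisely the normalized holomorphy potentials (the first eigenvalue of $-\Delta_F$ is $\ge 1$, with eigenspace the holomorphy potentials).

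\emph{Key reduction.} Applying this to $F$ itself gives
\begin{align*}
\Delta F+|\partial F|^2+F=c_0 .
\end{align*}
Tracing the soliton equation gives $s(\omega_g)=n+\Delta F$ in suitable units. Extremality says $\Delta F$ is a holomorphy potential, and $F$ is one as well. Hence
\begin{align*}
u:=|\partial F|^2_g=|V|^2_g=c_0-F-\Delta F
\end{align*}
is also a holomorphy potential, so $\Delta_F u+u=c_1$.

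\emph{Bochner formula.} Next I compute $\Delta_F|V|^2$ for the holomorphic field $V$ using the Bochner formula. Holomorphy of $V$ kills the $\bar\nabla V$ term, and the Ricci term combines with the weight via $\Ric=\omega_g+\sqrt{-1}\partial\bar\partial F$. This should give
\begin{align*}
\Delta_F|V|^2=|\nabla V|^2-|V|^2 ,
\end{align*}
which is a routine computation where I must track signs. Comparing with $\Delta_F u+u=c_1$ yields the rigid identity $|\nabla V|^2_g\equiv c_1$, so the $(1,1)$-Hessian of $F$ has constant norm.

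\emph{Conclusion.} There are two routes, both exploiting integration against $e^F\omega_g^n$.

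\emph{(a) Integral route.} Integrate $|\nabla\bar\nabla F|^2$ by parts twice using the weighted Bochner identity. This gives
\begin{align*}
\int|\nabla V|^2e^F=\int(\Delta_F F)^2e^F-\int|\partial F|^2e^F .
\end{align*}
Since $\Delta_F F=c_0-F$, and since $\int|\partial F|^2 e^F=\int F(c_0-F)e^F$, both sides become explicit in the moments $\int F^k e^F$. The constancy $|\nabla V|^2=c_1$, together with $\int(c_1-u)e^F=0$ from integrating $\Delta_F u+u=c_1$, should then give an equality of the form $\int(F-\bar F)^2e^F=0$. That forces $F$ to be constant, hence $F=0$.

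\emph{(b) Pointwise route.} Evaluate at a maximum and a minimum of $F$, where $V=0$. At such points $u=0$, $\Delta_F u=|\nabla V|^2=c_1$, and $\nabla V$ is a Hermitian endomorphism whose eigenvalues have opposite signs at the max and at the min. Together with $\Delta F=c_0-F$ there, this gives $\mathrm{tr}\nabla V(p_{\max})=c_0-\max F$ and $\mathrm{tr}\nabla V(p_{\min})=c_0-\min F$, while the squared norms of $\nabla V$ agree. This should yield $\max F=\min F$ after combining with $c_0$.

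\emph{Main obstacle.} The hard part is this final step: converting $|\nabla V|^2\equiv\mathrm{const}$ into $V=0$. Constant-norm Hessians alone are not contradictory; for instance, a parallel field would be consistent with them. So the argument must genuinely use compactness and the weighted measure, or the fixed-point data of the torus generated by $JV$. I would try (a) first. If the moment bookkeeping does not close, I would fall back on (b), supplemented by the Duistermaat--Heckman localization of $\int e^F$ at the fixed points, to pin down $c_0$ and $c_1$.
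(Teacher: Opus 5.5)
\textbf{There is a genuine gap in the final step.}

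Your key reduction matches the paper's starting point. The conservation law makes $u=|\partial F|^2=c_0-F-\Delta F$ a holomorphy potential. Your Bochner identity $\Delta_F u=|\nabla V|^2-u$ holds on every soliton, so $|\nabla V|^2\equiv c_1$ is correct. (Incidentally, $J\grad F$ is already Killing for the given $g$; you may not replace $g$.) The trouble is that neither concluding route closes.

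\emph{Route (a).} Your displayed identity is false. Since $\nabla_i\nabla_j F=0$, integrating the $(1,1)$-Hessian by parts produces no curvature term:
\[
\int|\nabla\bar\nabla F|^2e^{F}\omega_g^n=\int(\Delta_F F)^2e^{F}\omega_g^n .
\]
The version with $-\int|\partial F|^2e^F\omega_g^n$ is the weighted Bochner formula for $\bar\nabla\bar\nabla F$, which uses $\Ric(\omega_g)-\sqrt{-1}\partial\bar\partial F=\omega_g$. Its left side vanishes on every soliton. Your identity would prove too much. Since $\Delta_F F=c_0-F$ and $\int(c_0-F)e^F\omega_g^n=0$, one has $\int(\Delta_F F)^2e^F\omega_g^n=\int|\partial F|^2e^F\omega_g^n$. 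Your formula would then give $\nabla V\equiv 0$, hence $F$ constant, on every K\"ahler--Ricci soliton without using extremality. That contradicts Koiso's and Cao's non-Einstein soliton on the blow-up of $\mathbf{P}^2$ at a point. With the correct formula, all the integrals you list reduce to the single relation $c_1\int e^F\omega_g^n=\int(c_0-F)^2e^F\omega_g^n$, which only determines $c_1$.

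\emph{Route (b).} This is equally inconclusive. Here $c_0$ is an $e^F$-weighted mean of $F$, so $c_0-\max F<0<c_0-\min F$. These signs match those of the semidefinite Hessians at the two points. A semidefinite Hermitian matrix of norm $\sqrt{c_1}$ can have any trace of absolute value in $[\sqrt{c_1},\sqrt{n c_1}]$. Nothing forces $\max F=\min F$ unless you already know $c_1=0$.

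\textbf{The missing idea} is to use the sign $u\ge 0$ together with two facts: $u$ is a Killing potential, and $u$ vanishes wherever $V$ does. This is the paper's argument, and it makes the Bochner step unnecessary.
\begin{itemize}
\item $JV$ is Killing and preserves $F$ and $s(\omega_g)$, so it preserves $u$ and therefore commutes with $J\grad u$.
\item These two fields generate a torus $T\subset\Isom(X,g)$. Its action is Hamiltonian because $H^1(X,\mathbf{R})=0$, and $u=\ell\circ\mu$ for some affine $\ell$.
\item On $\Fix(T)$ one has $V=0$, hence $u=0$.
\item By Atiyah--Guillemin--Sternberg, $\mu(X)$ is the convex hull of $\mu(\Fix(T))$. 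Hence $u\equiv 0$, so $F$ is constant and, by the normalization, $F=0$.
\end{itemize}

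If you prefer to avoid convexity, there is an elementary substitute. The maximum set of the Killing potential $u$ is a compact complex submanifold preserved by the flow of $JV$. Hence $\grad F$ is tangent to it. At a maximum of $F$ restricted to this submanifold, $V=0$, so $\max u=0$.

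Your ``main obstacle'' paragraph gestures at the fixed-point data of the torus. Without this positivity and convexity step, however, the proposal does not reach the conclusion.
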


We also note that \cite{N23} established closely related rigidity results concerning Mabuchi's generalized K\"ahler--Einstein metrics (also referred to as Mabuchi solitons in the literature), both in relation to extremal K\"ahler metrics and to K\"ahler--Ricci solitons. 
Analogous results in the more general setting of $\sigma$-extremal K\"ahler metrics and $\sigma$-solitons were subsequently obtained in \cite{NN25}. 

As a consequence of Theorem~\ref{main_thm}, we have the following corollary. 

\begin{corollary}
Let $X$ be a Fano manifold. 
If $X$ does not admit a K\"ahler--Einstein metric, then no extremal K\"ahler metric on $X$ can be holomorphically isometric to a K\"ahler--Ricci soliton. 
\end{corollary}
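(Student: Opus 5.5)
The plan is to combine the two holomorphy conditions through the weighted Laplacian attached to the soliton, and to show that $|\partial F_g|^2$ is constant. Write $F=F_g$, $V=\grad^{\mathbf{C}}_{\omega_g}F$ and $\Delta=-g^{i\bar j}\partial_i\partial_{\bar j}$. Taking the trace of \eqref{Ricci_potential} gives
\[
s(\omega_g)-n=-c_n\,\Delta F
\]
for a positive constant $c_n$ depending only on normalizations. So the extremal condition says exactly that $\Delta F$ is also a holomorphy potential: $\grad^{\mathbf{C}}_{\omega_g}\Delta F$ is holomorphic. The aim is to show that $\Delta F$, $F$ and $|\partial F|^2$ are tied together tightly enough to force $F$ to be constant. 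The normalization in \eqref{Ricci_potential} then gives $F=0$.

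\emph{Step 1 (weighted eigen-equation).} On a Kähler--Ricci soliton, a real function $u$ is a holomorphy potential exactly when it solves
\[
\Delta u+\langle \partial u,\bar\partial F\rangle=-u+\text{const};
\]
this is the standard Tian--Zhu/Futaki computation. One differentiates $\Delta_F u+u$, where $\Delta_F=\Delta+V$, and uses $\Ric-\omega=\sqrt{-1}\partial\bar\partial F$ in the Bochner formula. Applying this with $u=F$ gives
\[
\Delta F+|\partial F|^2+F=\text{const}.
\]
Since $F$ and $\Delta F$ are holomorphy potentials, so is $h:=|\partial F|^2=V(F)$.

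\emph{Step 2 (bracket identity).} $F$ is real, so $JV$ is Killing and $V$ is the complexification of a Killing field. $h$ is also real, so $\grad^{\mathbf{C}}h$ lies in the same reductive Lie algebra. The key claim is that $\grad^{\mathbf{C}}(X_u(v))$ is, up to a constant factor, the bracket $[X_u,X_v]$. This should hold for holomorphy potentials $u,v$ with $u$ real, $X_u=\grad^{\mathbf{C}}u$, and with both $u$ and $v$ normalized, for instance with $\int u\,e^{F}\omega^n=0$. I would prove it by noting that for Killing potentials the Poisson bracket $\{u,v\}$ is the potential of $[JX_u,JX_v]$, and that $X_u(v)$ differs from $\sqrt{-1}\{u,v\}$ only by the real part, which is controlled by the real-holomorphic structure. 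Granting this with $u=v=F$, $\grad^{\mathbf{C}}h$ corresponds to $[V,V]=0$, so $h$ is constant. (Equivalently, $h$ is $JV$-invariant and one checks that $V(h)=0$ as well.)

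\emph{Step 3 (conclusion).} $F$ attains its maximum on the compact manifold $X$, and $h=|\partial F|^2$ vanishes there. Since $h$ is constant, $h\equiv 0$, so $F$ is constant. The normalization $\int_X(1-e^{F})\omega^n=0$ then forces $F=0$, i.e.\ $\Ric(\omega_g)=\omega_g$.

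The main obstacle is Step 2. It requires justifying that the holomorphy potential of $V(F)$ is governed by the bracket $[V,V]$, rather than by some symmetric expression that need not vanish. If the direct Lie-algebra argument fails, I would try Step 1 again with $u=h$, which gives $\Delta h+V(h)=-h+\text{const}$. I would then integrate this against $e^{F}\omega^n$ and against $F\,e^{F}\omega^n$, using the $JV$-invariance of all functions involved and $\int \Delta_F(\cdot)\,e^F\omega^n=0$. The goal of those integrations is $\int h^2e^F\omega^n=\big(\int he^F\omega^n\big)^2/\int e^F\omega^n$, which again forces $h$ to be constant.
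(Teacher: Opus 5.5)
Your Step 1 is the paper's Lemma~\ref{lem:conservation}, and it correctly makes $h=|\partial F|^2$ a real holomorphy potential. Your signs are off for the positive Laplacian you fixed: the right equation is $\Delta u-\langle\partial u,\bar\partial F\rangle=u+\mathrm{const}$. That is harmless. The gap is Step 2, and its identity is not merely unproved but false. If $X_u=\grad^{\mathbf{C}}_{\omega_g}u$ is holomorphic, then $\partial_{\bar l}(X_u^k\partial_k v)=X_u^k\nabla_k\nabla_{\bar l}v$, so
\[
\grad^{\mathbf{C}}_{\omega_g}\bigl(X_u(v)\bigr)=\nabla_{X_u}X_v ,
\]
which is a covariant derivative, not a bracket.

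Only the antisymmetrization $X_u(v)-X_v(u)$ has gradient $[X_u,X_v]$; for real $u,v$ it equals $2\sqrt{-1}\operatorname{Im}X_u(v)$. The real part $\tfrac12\bigl(X_u(v)+X_v(u)\bigr)$, which you dismiss as \emph{controlled by the real-holomorphic structure}, has gradient $\tfrac12(\nabla_{X_u}X_v+\nabla_{X_v}X_u)$. For $u=v=F$ the antisymmetric part vanishes identically and $h$ is entirely the symmetric part. Hence $\grad^{\mathbf{C}}_{\omega_g}h=\nabla_VV$, and no pointwise identity kills this. For example, take the round $S^2=\mathbf{CP}^1$ with $u=z$ the height function, a Killing potential. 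Then $|\partial u|^2$ is proportional to $1-z^2$, which is not constant, although $[X_u,X_u]=0$.

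Your parenthetical claim $V(h)=0$ amounts to $\langle\grad_gF,\grad_gh\rangle=0$, which is essentially the whole theorem. The fallback also yields no information. Write $m$ for the $e^{F}\omega_g^n$-average of $h$. Integrating $\Delta_F(h-m)=h-m$ against $e^{F}\omega_g^n$ merely fixes $m$. Integrating against $Fe^{F}\omega_g^n$ collapses to a tautology, because $\Delta_F$ is self-adjoint for $e^{F}\omega_g^n$ and $F$, $h$ are, after normalization, eigenfunctions with the same eigenvalue.

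What is missing is a global input, which is exactly what the paper supplies. Since $K_F=J\grad_gF$ is Killing, it preserves $s(\omega_g)$ and $F$, so the conservation law gives $K_F(h)=0$ and hence $[K_F,K_h]=0$. The closure of their flows is a torus $T$ acting in a Hamiltonian way, since $H^1(X,\mathbf{R})=0$, and $h$ is an affine function of the moment map. Moreover $h$ vanishes on $\Fix(T)$ because $dF=0$ there, so Atiyah--Guillemin--Sternberg convexity forces $h\equiv 0$.

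Your Step 3 only sees the shadow of this. Vanishing of $h$ at the maximum of $F$, or even at every critical point of $F$, propagates to all of $X$ only once you know $h$ is constant, or affine in a moment map whose image is the convex hull of those points.

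An elementary substitute for convexity also works. The maximum set $Z$ of $h$ is a union of components of the zero set of $K_h$, hence a compact complex submanifold, and it is $K_F$-invariant. At a maximum point $q$ of $F|_Z$, the vector $\grad_gF(q)=-JK_F(q)$ is both tangent and normal to $Z$. So it vanishes, and $\max h=h(q)=0$.

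Finally, for the corollary as stated you should record the trivial reduction you skipped. Pulling back the soliton by the holomorphic isometry shows the given extremal metric is itself a K\"ahler--Ricci soliton with K\"ahler form in $c_1(X)$. Then Theorem~\ref{main_thm} makes it K\"ahler--Einstein, contradicting the hypothesis.
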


This paper is organized as follows. In Section 2, we briefly fix the notation and conventions used throughout the paper. 
In Section~3, we prove Theorem~\ref{main_thm}. 
The key of the proof is to combine the conservation law for K\"ahler--Ricci solitons with the convexity theorem for Hamiltonian torus actions, which forces the squared norm of the gradient of the Ricci potential to vanish identically. 
Finally, in Section 4, we explain the Sasakian analogue of our argument and obtain corresponding results for Sasaki--Ricci solitons and Sasaki-extremal metrics. 

\begin{acknowledgements}
The author was supported by JSPS KAKENHI Grant Number JP26K06790. 
\end{acknowledgements}

\section{Notation and conventions}
In this paper, we use the following notation and conventions. 
Let $(X, J, g)$ be an $n$-dimensional compact K\"ahler manifold with complex structure $J$ and K\"ahler metric $g$. 

\begin{itemize}
\item The K\"ahler form $\omega_{g}$ of $g$ is given by 
\begin{align*}
\omega_{g}(V, W) \coloneqq \frac{1}{2\pi}g(JV, W). 
\end{align*}
\item The pointwise scalar product on the space of complex differential forms induced from the K\"ahler metric is still denoted by $g$. 
\item We denote by $\bar{\partial}^{\ast}$ the formal adjoint of $\bar{\partial}$ with respect to the $L^{2}$-Hermitian inner product induced by $g$.
\item We denote by $\Delta_{\omega_g} \coloneqq \bar{\partial}\bar{\partial}^{\ast} + \bar{\partial}^{\ast}\bar{\partial}$ the $\bar{\partial}$-Laplacian. 
In local holomorphic coordinates, it  is given by $\Delta_{\omega_{g}}u = -\nabla^{i}\nabla_{i}u$ for each $u \in C^{\infty}(X, \mathbf{C})$. 
\item For each $u\in C^{\infty}(X,\mathbf{C})$, we denote by $\grad_{\omega_g}^{\mathbf{C}}u$ the $(1,0)$-part of $\grad_g u$. 
In local holomorphic coordinates, it is given by 
\[
\grad_{\omega_g}^{\mathbf{C}}u
=
\nabla^{i}u\frac{\partial}{\partial z^{i}}, 
\]
and satisfies
\[
i_{\grad_{\omega_g}^{\mathbf{C}}u}\omega_g
=
\frac{\sqrt{-1}}{2\pi}\bar{\partial}u. 
\]
\end{itemize}

\section{Proof of Theorem~\ref{main_thm}}
We first recall a standard identity for K\"ahler--Ricci solitons.
It is the K\"ahler counterpart of the standard conservation law for
gradient Ricci solitons; see \cite[Proposition 1.15]{CCGGIIKLLN07}; cf. also \cite[Section 20]{H95} for the steady case. 

\begin{lemma}\label{lem:conservation}
Let $g$ be a K\"ahler--Ricci soliton on a Fano manifold $X$. 
Then
\begin{equation}\label{eq:conservation}
|\bar\partial F_{g}|_{g}^{2} + F_{g} + s(\omega_{g}) = C
\end{equation}
for a constant $C \in \mathbf{R}$. 
\end{lemma}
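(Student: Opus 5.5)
We prove the conservation law by showing that the function $\Phi \coloneqq |\bar\partial F_{g}|_{g}^{2} + F_{g} + s(\omega_{g})$ has vanishing $\partial$-derivative; since $\Phi$ is real and $X$ is connected, this forces $d\Phi = 0$, so $\Phi$ is constant. We work in local holomorphic coordinates, write $F = F_g$, and use the normalizations of Section 2. The factor $\frac{\sqrt{-1}}{2\pi}$ and the $2\pi$ in $\omega_g$ cancel, so the first condition in \eqref{Ricci_potential} reads
\[
R_{i\bar j} - g_{i\bar j} = \nabla_i\nabla_{\bar j}F .
\]
The soliton condition says that $\grad_{\omega_g}^{\mathbf{C}} F = \nabla^{i}F\,\partial_{i}$ is holomorphic. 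Lowering the index, this is equivalent to
\[
\nabla_{\bar k}\nabla_{\bar j} F = 0 .
\]
Since $F$ is real, conjugating gives $\nabla_k\nabla_j F = 0$ as well.

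\emph{Step 1: trace.} Tracing the soliton equation gives
\[
s = n + \nabla^{\bar j}\nabla_{\bar j}F = n - \Delta_{\omega_g} F
\]
(up to the sign convention for $\Delta_{\omega_g}$). Hence
\[
\nabla_k s = -\nabla_k \Delta F .
\]

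\emph{Step 2: contracted Bianchi identity.} The Kähler Bianchi identity $\nabla^{\bar j} R_{k\bar j} = \nabla_k s$ applied to the soliton equation gives
\[
\nabla_k s = \nabla^{\bar j}\nabla_k\nabla_{\bar j}F .
\]
Commuting derivatives turns the right-hand side into
\[
\nabla_k\big(\nabla^{\bar j}\nabla_{\bar j}F\big) + R_{k\bar j}\nabla^{\bar j}F = -\nabla_k s + R_{k\bar j}\nabla^{\bar j} F ,
\]
with signs fixed by the convention. Substituting $R_{k\bar j} = g_{k\bar j} + \nabla_k\nabla_{\bar j}F$ gives
\[
2\nabla_k s = \nabla_k F + \nabla_k\nabla_{\bar j}F\,\nabla^{\bar j}F .
\]
The extra factor of $2$ has to be checked carefully against the sign of $\Delta$, and the correct version should reduce to $\nabla_k s = -\nabla_k F - \nabla_k\nabla_{\bar j}F\,\nabla^{\bar j}F$.

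\emph{Step 3: gradient of the norm.} We have
\[
\nabla_k |\bar\partial F|^2 = \nabla_k\nabla_{\bar j}F\,\nabla^{\bar j}F + \nabla_{\bar j}F\,\nabla_k\nabla^{\bar j}F .
\]
The second term is $\nabla^{j} F\,\nabla_k\nabla_j F$, which vanishes by holomorphy. So $\nabla_k|\bar\partial F|^2 = \nabla_k\nabla_{\bar j}F\,\nabla^{\bar j}F$, which is precisely the quadratic term from Step 2. Summing the three gradients, the quadratic terms cancel, the $\nabla_k F$ terms cancel, and we get $\nabla_k\Phi = 0$.

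The main obstacle is the sign and coefficient bookkeeping in Step 2: the commutation $[\nabla^{\bar j},\nabla_k]$ and the conventions for $\Delta_{\omega_g}$ and the Ricci form. I would fix these by testing on the Einstein case $F = 0$, where the identity must be trivial, and by matching with the real identity $|\nabla f|^2 + R - f = \mathrm{const}$ for shrinking gradient solitons, with the sign of $f$ flipped since $\Ric - g = +\nabla^2 F$ here. An alternative route is to show $\bar\partial\Phi = 0$ using $\nabla_{\bar k}\nabla_{\bar j}F = 0$ directly; this is the same computation conjugated.
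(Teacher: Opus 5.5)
Your overall strategy (contracted Bianchi identity, then a commutation of covariant derivatives, then holomorphy of $\grad^{\mathbf{C}}_{\omega_g}F$) is a legitimate alternative to the paper's route. The paper instead quotes Futaki's result that $\Delta_{F_g}(F_g+c)=F_g+c$ and combines it with $s(\omega_g)-n=-\Delta_{\omega_g}F_g$.

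However, Step~2, the only nontrivial step, is wrong as written, and the identity you finally use is guessed rather than derived. The term produced by the Bianchi identity is $g^{l\bar j}\nabla_l\nabla_k\nabla_{\bar j}F$, and its two outer derivatives are both of type $(1,0)$. Since the curvature of a K\"ahler metric is of type $(1,1)$, $[\nabla_l,\nabla_k]=0$, so no Ricci term appears. Hence $g^{l\bar j}\nabla_l\nabla_k\nabla_{\bar j}F=\nabla_k\bigl(g^{l\bar j}\nabla_l\nabla_{\bar j}F\bigr)=+\nabla_k s$ by your own Step~1, not $-\nabla_k s$.

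So the Bianchi identity applied to $R_{k\bar j}=g_{k\bar j}+\nabla_k\nabla_{\bar j}F$ is a tautology, valid for every K\"ahler metric in $c_1(X)$. The information has to come from the soliton condition, which your Step~2 never uses. That is why you land on $2\nabla_k s=\nabla_kF+\nabla_k\nabla_{\bar j}F\,\nabla^{\bar j}F$. This is false for every non-Einstein soliton: combined with the correct identity it would force $s$ to be constant, hence $g$ cscK in $c_1(X)$ and therefore K\"ahler--Einstein. Your proposed calibration by the case $F=0$ cannot detect any of this, since every term vanishes there.

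The missing step is to use the symmetry of the Hessian first, $\nabla_l\nabla_k\nabla_{\bar j}F=\nabla_l\nabla_{\bar j}\nabla_kF$, and only then commute the \emph{mixed} pair $\nabla_l,\nabla_{\bar j}$. With $R_{k\bar q}=-\partial_k\partial_{\bar q}\log\det g$, one has $g^{l\bar j}[\nabla_l,\nabla_{\bar j}]\alpha_k=-R_{k\bar q}g^{p\bar q}\alpha_p$ for $(1,0)$-forms $\alpha$. Therefore
\[
\nabla_k s=g^{l\bar j}\nabla_l\nabla_{\bar j}\nabla_kF=g^{l\bar j}\nabla_{\bar j}\bigl(\nabla_l\nabla_kF\bigr)-R_{k\bar q}\nabla^{\bar q}F=-R_{k\bar q}\nabla^{\bar q}F,
\]
where the middle term vanishes exactly because $\nabla_l\nabla_kF=0$. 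Substituting the soliton equation gives $\nabla_k s=-\nabla_kF-\nabla_k\nabla_{\bar j}F\,\nabla^{\bar j}F$. Your Step~3 and the reality of $\Phi$ then finish the proof. With this repair, your argument is a self-contained version of the computation behind Futaki's proposition, which the paper uses as a black box.
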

\begin{proof}
For the reader's convenience, we include a short proof using Futaki's \emph{weighted Laplacian} (\cite{Fut87, Fut88}). 
Recall that the weighted Laplacian is defined by
\[
\Delta_{F_{g}}u = \Delta_{\omega_{g}}u - g(\bar\partial u, \overline{\bar\partial F_{g}}) = -\nabla^{i}\nabla_{i}u - \nabla^{i}u\nabla_{i}F_{g} 
\]
for any $u \in C^{\infty}(X, \mathbf{C})$. 

We use the standard fact that, if $\grad^{\mathbf{C}}_{\omega_{g}}u$ is holomorphic, then there exists a constant $c \in \mathbf{C}$ such that 
\[
\Delta_{F_{g}}(u + c) = u+c; 
\]
see, for example, \cite[Proposition 4.1]{Fut87}. 

Applying this to $u = F_{g}$, for which the constant $c$ is necessarily real, and using
\[
s(\omega_{g}) - n = -\Delta_{\omega_{g}}F_{g}, 
\]
we obtain
\[
 F_{g} + c = n - s(\omega_{g})-|\bar\partial F_{g}|_{g}^{2}, 
\]
which completes the proof. 
\end{proof}

We now prove Theorem~\ref{main_thm}. 
Let $g$ be an extremal K\"ahler--Ricci soliton on a Fano manifold $X$, and set 
\[
K_{u} \coloneqq J\grad_{g}u
\]
for any $u \in C^{\infty}(X, \mathbf{R})$. 
By assumption, both $K_{F_{g}}$ and $K_{s(\omega_{g})}$ are Killing vector fields. 
Let $h \coloneqq |\bar\partial F_{g}|_{g}^{2}$. 
Then, by Lemma~\ref{lem:conservation}, we have 
\[
h = C - F_{g} - s(\omega_{g}). 
\]
In particular, $K_{h}$ is also a Killing vector field. 

We next observe that $K_{F_{g}}$ and $K_{h}$ commute. 
Indeed, $K_{F_{g}}(F_{g})=0$, and since $K_{F_{g}}$ is Killing, it preserves the scalar curvature. 
Hence \eqref{eq:conservation} gives 
\[
K_{F_{g}}(h) = 0. 
\]
Moreover,
\[
i_{K_{h}}\omega_{g} = -\frac{1}{2\pi}dh\quad \text{and}\quad L_{K_{F_{g}}}\omega_{g} = 0. 
\]
Therefore
\[
i_{[K_{F_{g}}, K_{h}]}\omega_{g} 
= L_{K_{F_{g}}}i_{K_{h}}\omega_{g} - i_{K_{h}}L_{K_{F_{g}}}\omega_{g}
= -\frac{1}{2\pi}d(K_{F_{g}}(h))
= 0. 
\]
Since $\omega_g$ is nondegenerate, we obtain 
\[
[K_{F_{g}}, K_{h}] = 0. 
\]

Let $T$ be the closure in $\Isom(X,g)$ of the subgroup generated by the flows of $K_{F_{g}}$ and $K_{h}$. 
Since $K_{F_{g}}$ and $K_{h}$ commute, $T$ is a torus. 
Moreover, its action preserves $\omega_{g}$. 
Since $X$ is Fano, the Kodaira vanishing theorem and the Hodge decomposition imply that $H^{1}(X, \mathbf{R}) = \{0\}$. 
Thus the $T$-action is Hamiltonian. 
Let $\mu \colon X \to \Lie(T)^{\ast}$ be a moment map. 
Since $K_{h} \in \Lie(T)$ and $h$ is, up to a non-zero constant factor, a Hamiltonian potential of $K_{h}$, there exists an affine
function $\ell_{h} \colon \Lie(T)^{\ast} \to \mathbf{R}$ such that
\[
h = \ell_{h} \circ \mu. 
\]

Let $\operatorname{Fix}(T)$ be the fixed point set of the $T$-action.
If $x \in \Fix(T)$, then $K_{F_{g}}(x)=0$, and hence $dF_{g}(x) = 0$. 
Therefore, 
\[
h(x)=|\bar\partial F_{g}|_{g}^{2}(x) = 0. 
\]
It follows that $\ell_{h}$ vanishes on $\mu(\Fix(T))$. 
By the Atiyah--Guillemin--Sternberg convexity theorem (\cite{At82, GS82}), $\mu(X)$ is the convex hull of $\mu(\Fix(T))$. 
Since $\ell_{h}$ is affine, it follows that $\ell_{h}$ vanishes on $\mu(X)$. 
Hence
\[
h=|\bar\partial F_{g}|_{g}^{2} = 0 
\]
on $X$. 
In particular, $F_{g}$ is constant. 
By the normalization of the Ricci potential in \eqref{Ricci_potential}, we have $F_{g} = 0$, and consequently
\[
\Ric(\omega_{g})=\omega_{g}. 
\]
This completes the proof. 

\section{Concluding remarks}
We conclude this paper by pointing out that the preceding argument
has a Sasakian analogue. 
We refer to \cite{BG08} for basic material on Sasaki geometry, and to \cite{BGS08, FOW09} for the definitions of Sasaki--Ricci solitons and Sasaki-extremal metrics. 
Write $\eta$ for the contact form, $\xi$ for the Reeb vector field, and $g$ for the Sasaki metric on a compact Sasaki manifold $S$. 
The Killing vector fields appearing in the proof of Theorem~\ref{main_thm} have natural Sasakian counterparts, which are strict contact Killing vector fields. 
As in the proof above, they commute with each other, and their strict contactness implies that they commute with $\xi$. 
The closure in $\Isom(S, g)$ of the subgroup generated by their flows together with the Reeb flow is therefore a torus $T$. 

The Reeb foliation carries the natural transverse K\"ahler form $\frac{1}{2}d\eta$, and the functions $\frac{1}{2}\eta(Y)$, $Y \in \Lie(T)$, define the corresponding moment map.
By Ishida's transverse convexity theorem \cite[Theorem 1.2]{I17}, its image is the convex hull of the values attained on the common critical set of its components. 
At any point in this common critical set, every infinitesimal $T$-orbit is tangent to the Reeb direction. 
In particular, if $F_{g}$ denotes the transverse Ricci potential, the transverse part of the Sasakian counterpart of $K_{F_{g}}$ vanishes there, and hence so does $h=|\bar\partial_{B}F_{g}|^{2}$, where $\bar\partial_{B}$ denotes the basic $\bar\partial$-operator. 
Thus an argument parallel to the proof of Theorem~\ref{main_thm} yields the following result. 


\begin{theorem}
Every extremal Sasaki--Ricci soliton on a compact Sasaki manifold is Sasaki--Einstein. 
\end{theorem}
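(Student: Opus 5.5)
We follow the proof of Theorem~\ref{main_thm}, working transversally to the Reeb foliation. Let $(S,\eta,\xi,g)$ be an extremal Sasaki--Ricci soliton. Write $F_g$ for the basic transverse Ricci potential, normalized as in \eqref{Ricci_potential} with respect to the transverse K\"ahler form $\frac{1}{2}d\eta$, and $s^T$ for the transverse scalar curvature. The hypotheses say that the basic $(1,0)$-gradients of $F_g$ and of $s^T$ are transversely holomorphic. Each of them lifts uniquely to a strict contact Killing field $\tilde K_u$ on $S$: it is determined by $\eta(\tilde K_u)=u$ (up to the paper's normalization) together with $\tilde K_u$ projecting to $J\grad u$ on the transverse geometry.

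\textbf{Step 1: basic conservation law.} We prove the analogue of Lemma~\ref{lem:conservation},
\[
h+F_g+s^T=C,\qquad h:=|\bar\partial_B F_g|^2.
\]
The argument of Lemma~\ref{lem:conservation} is purely local and transverse. Futaki's weighted Laplacian on basic functions satisfies $\Delta_{F_g}(F_g+c)=F_g+c$ whenever the gradient is transversely holomorphic, which is the basic version of \cite[Proposition 4.1]{Fut87}. Combining this with $s^T-n=-\Delta_B F_g$ gives the identity. Consequently $h$ is basic, its transverse gradient is holomorphic, and it has a strict contact Killing lift $\tilde K_h$.

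\textbf{Step 2: commutation and the torus.} The transverse Killing field of $F_g$ preserves $F_g$ and $s^T$, so $\tilde K_{F_g}(h)=0$. The Cartan-formula computation of the K\"ahler case, with $\frac12 d\eta$ in place of $\omega_g$, shows that the transverse parts of $[\tilde K_{F_g},\tilde K_h]$ vanish. Both fields are strict contact, so their bracket is again strict contact. Its contact Hamiltonian is $\eta([\tilde K_{F_g},\tilde K_h])=\pm\{F_g,h\}_T=0$, hence the bracket vanishes. Strict contact fields commute with $\xi$. Therefore the closure in $\Isom(S,g)$ of the group generated by the flows of $\xi$, $\tilde K_{F_g}$ and $\tilde K_h$ is a torus $T$, because $\Isom(S,g)$ is compact.

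\textbf{Step 3: convexity.} We use the contact moment map $\mu\colon S\to\Lie(T)^*$, $\langle\mu,Y\rangle=\frac12\eta(Y)$. In this picture no $H^1$ argument is needed, since $\eta$ supplies the Hamiltonians directly. Then $h=\ell_h\circ\mu$ for a linear function $\ell_h$, because $\eta(\tilde K_h)=h$ up to a constant factor.

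By Ishida's transverse convexity theorem \cite[Theorem 1.2]{I17}, $\mu(S)$ is the convex hull of $\mu(\mathcal C)$, where $\mathcal C$ is the common critical set of the components of $\mu$. Take $x\in\mathcal C$. Since $d\eta(Y,\cdot)=-d(\eta(Y))$ for strict contact $Y$, criticality of every component forces every $Y\in\Lie(T)$ to satisfy $Y(x)\in\mathbf{R}\xi(x)$. In particular the transverse part of $\tilde K_{F_g}$ vanishes at $x$, so $d_BF_g(x)=0$ and $h(x)=0$. Thus $\ell_h$ vanishes on $\mu(\mathcal C)$, hence on its convex hull $\mu(S)$, and so $h\equiv 0$.

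It follows that $F_g$ is constant, and the normalization gives $F_g=0$. The transverse metric is then K\"ahler--Einstein with the constant fixed by the Sasaki--Ricci soliton normalization, i.e. $\Ric^T=2(n+1)g^T$. By the standard correspondence, $g$ is Sasaki--Einstein.

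The main obstacle is Step 3. We must correctly identify the common critical set in Ishida's theorem with the locus where all of $\Lie(T)$ is tangent to $\xi$, including the case where $\mu$ is linear rather than affine. We must also make sure that Ishida's hypotheses hold here: a compact connected $T$ containing the Reeb flow, with $\xi$ in the relevant cone.
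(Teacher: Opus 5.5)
Your proposal is correct and follows essentially the same route as the paper. You lift the transverse Killing fields of $F_g$ and $h$ to commuting strict contact Killing fields, take the torus they generate together with the Reeb flow, and apply Ishida's transverse convexity theorem to the moment map $\frac12\eta(Y)$, using that $h$ vanishes on the common critical set where all of $\Lie(T)$ is tangent to $\xi$. Your extra details — the basic conservation law, the observation that $\eta([\tilde K_{F_g},\tilde K_h])$ is a constant multiple of $\tilde K_{F_g}(h)=0$, and the linearity of $\ell_h$ — fill in what the paper leaves as ``parallel to the proof of Theorem~\ref{main_thm}''.
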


\begin{corollary}
Let $S$ be a compact Sasaki manifold. If $S$ does not admit a Sasaki--Einstein metric, then no Sasaki-extremal metric on $S$ can be transversely holomorphically isometric to a Sasaki--Ricci soliton via a Reeb-preserving diffeomorphism. 
\end{corollary}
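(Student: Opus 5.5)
The plan is to transplant the Kähler argument from Section 3 to the transverse Kähler geometry of the Reeb foliation, using the basic (transverse) analogues of each ingredient.

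\emph{Step 1: conservation law and commuting fields.} Let $g$ be an extremal Sasaki--Ricci soliton, and let $F_g$ be its transverse Ricci potential, a basic function. Because $g$ is a Sasaki--Ricci soliton, $\grad^{\mathbf{C}}F_g$ is transversely holomorphic. Because $g$ is Sasaki-extremal, the same holds for the transverse scalar curvature $s^T$. Lemma~\ref{lem:conservation} is a pointwise computation with Futaki's weighted Laplacian, so it carries over verbatim to basic functions and the basic Laplacian. This gives
\[
h \coloneqq |\bar\partial_B F_g|^2 = C - F_g - s^T .
\]
Hence $h$ is a basic function whose transverse gradient is transversely holomorphic.

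For a basic function $u$ of this kind, the lift $\tilde K_u$ is the strict contact vector field with $\eta(\tilde K_u) = u$ (up to normalization) whose transverse part is $J\grad u$. Such a field is Killing for $g$ and commutes with $\xi$. The fields $\tilde K_{F_g}$ and $\tilde K_h$ commute for the same reason as in the Kähler proof. Transversely, $\tilde K_{F_g}(h) = 0$, since $F_g$ is invariant under its own flow and $s^T$ is invariant under a transverse isometry. The transverse form $\frac12 d\eta$ then forces the bracket to be a multiple of $\xi$. Its $\eta$-component is $\tilde K_{F_g}(\eta(\tilde K_h)) - \tilde K_h(\eta(\tilde K_{F_g})) = 0$, using $L_{\tilde K}\eta = 0$ and $d\eta(\tilde K_{F_g}, \tilde K_h) = $ a derivative of $h$ along $\tilde K_{F_g}$. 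So the fields genuinely commute, and together with $\xi$ they generate a torus $T \subset \Isom(S,g)$.

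\emph{Step 2: convexity.} The contact moment map $\mu$, with components $\frac12\eta(Y)$ for $Y \in \Lie(T)$, has $h$ as one of its components up to an affine change, because $\eta(\tilde K_h) = h$ (after fixing constants). So $h = \ell_h \circ \mu$ with $\ell_h$ affine. By Ishida's transverse convexity theorem \cite[Theorem 1.2]{I17}, $\mu(S)$ is the convex hull of $\mu$ on the common critical set of the components.

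At a point $p$ of that critical set, every $Y \in \Lie(T)$ is tangent to $\xi$ at $p$. In particular the transverse part of $\tilde K_{F_g}$ vanishes at $p$, so $d_B F_g(p) = 0$ and $h(p) = 0$. Therefore $\ell_h$ vanishes on a set whose convex hull is $\mu(S)$, and since $\ell_h$ is affine, $h \equiv 0$. Thus $F_g$ is constant, and the normalization forces $F_g = 0$. This means the transverse metric is Kähler--Einstein, and with the standard normalization of the Reeb field this is exactly the Sasaki--Einstein condition.

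\emph{Main obstacle.} The hardest step is justifying the critical-set claim and the hypotheses of Ishida's theorem. One must check that critical points of all the components $\eta(Y)$ are exactly the points where $T$-orbits are tangent to $\xi$. I would do this by differentiating $\eta(Y)$: since $L_Y\eta = 0$, we have $d(\eta(Y)) = -i_Y d\eta$, and this vanishes iff $Y \in \ker d\eta = \mathbf{R}\xi$. The hypotheses of Ishida's theorem (a compact torus acting by strict contact transformations, with $\xi$ in its Lie algebra) hold by construction. The normalization of the Reeb field in the Sasaki--Einstein condition should also be checked, but it follows from the soliton equation's constant.
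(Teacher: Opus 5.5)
Your argument is essentially the paper's Section~4 sketch of the Sasakian theorem, with the details filled in correctly: the lifts of $K_{F_g}$ and $K_h$ as commuting strict contact Killing fields, the torus they generate together with $\xi$, Ishida's transverse convexity, and the vanishing of $h$ on the common critical set. But what you have proved is that theorem, not the stated corollary. Your proof starts from a single extremal Sasaki--Ricci soliton. It never uses the corollary's hypotheses: two metrics, a Sasaki-extremal $g_1$ and a Sasaki--Ricci soliton $g_2$, related only by a Reeb-preserving transversely holomorphic isometry $\phi$, together with the assumption that $S$ carries no Sasaki--Einstein metric. The paper leaves the passage from theorem to corollary implicit, but that passage is the whole content of the corollary beyond the theorem, and your proposal omits it.

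The missing step is short. Both the extremal condition and the soliton condition are expressed purely in terms of the Reeb field and the transverse K\"ahler structure (transverse complex structure and $\omega^T=\frac12 d\eta$), so they are transported by $\phi$. Concretely, suppose $\phi_*\xi_1=\xi_2$ and $\phi$ intertwines the transverse holomorphic structures and transverse metrics. Then $s^T_{g_2}\circ\phi=s^T_{g_1}$, and $\phi$ maps transversely holomorphic fields to transversely holomorphic fields. Hence the transverse gradient $\grad^{\mathbf C}s^T_{g_2}=\phi_*\grad^{\mathbf C}s^T_{g_1}$ is transversely holomorphic, i.e.\ $g_2$ is Sasaki-extremal. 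Alternatively, you can pull the soliton condition back to $g_1$: $\phi^*F_{g_2}-F_{g_1}$ is a basic pluriharmonic function on compact $S$, hence constant, so $\grad^{\mathbf C}F_{g_1}$ is transversely holomorphic. Either way you obtain an extremal Sasaki--Ricci soliton on $S$. Your argument shows it is Sasaki--Einstein, contradicting the hypothesis. With this paragraph added, your proposal follows the paper's route.
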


\end{document}